\newtheorem{ej}{\textbf{Example} \rm }
\theoremstyle{remark}
\newtheorem{defi}{Definition}[section]
\newtheorem{remark}{Remark}[section]
\theoremstyle{plain}
\newtheorem{prop}{Proposition}[section]
\newtheorem{teo}{Theorem}
\newtheorem{lema}{ \textbf{Lemma} \rm}
\providecommand{\abs}[1]{\lvert#1\rvert}
\DeclareMathOperator{\iti}{{\mathcal I}}
\DeclareMathOperator{\Z}{{\mathbb Z}}
\begin{document}

\title{Nonstandard Expansiveness}

\author[L. Ferrari]{Luis Ferrari}
\address{Departamento de Matemática y Aplicaciones, Cure, Universidad de la República, Maldonado, Uruguay}
\email{luisrferrari@gmail.com}
\date{\today}

\begin{abstract}
Let $(X,d)$ be a  metric space and $f: X \rightarrow X$ be a homeomorphism. We say that a dynamical system $(X,f)$ is 
\emph{expansive}, with constant of expansivity $c \in \mathbb{R{^+}}$, if for all $x,y \in X$ , $x \neq y$, exists $n \in \mathbb{Z}$, such that $d(f^n(x), f^n(y)) >c$. In this paper we will use the theory of Nonstandard Analysis to study a subfamily of these dynamics, which verify that for all $x,y \in X$, if $x\neq y$ then the set  $\lbrace n \in \mathbb{Z} : d(f^n(x), f^n(y) > c \rbrace$ is infinite.\\

\end{abstract}
\subjclass[2020]{26E35,37B05}
\keywords{Expansive homeomorphism, doubly asymptotic points, nonstandard analysis}
\maketitle

\section{Introduction}

\vspace{0.3cm}

Expansive dynamics are an important type of dynamical system. Furthermore, these dynamics have interesting connections with topology and other dynamical concepts. An example of this can be seen in the relationship between the topology of the space and the asymptotic  behavior of the dynamics. We know by \cite{utz1950unstable} that every expansive homeomorphism on an infinite compact metric space has asymptotic points; that is, exists $x\neq y$ such that $\lim_{n \rightarrow + \infty} d(f^n(x), f^n(y)) = 0$ (positive asymptotics) or  negative asymptotics ($n\to -\infty$). Dynamic systems without doubly asympotic points(positive and negative asymptotic) are a peculiar type of dynamic system. All of the examples that we know of expansive homeomorphisms on compact metric spaces that are not totally disconnected have
asymptotic points. 
By \cite{artigue2020subshift}, we know that there are infinite dynamics of this type such that they do not conjugate with each other but in the form of a subshift, and therefore on a totally disconnected space.\\
Groisman and da Silva \cite{jorgesamuel} introduce the notion of \emph{freely expansive dynamic}. Let $X$ be a compact metric space and a $f: X \rightarrow X$ homeomorphism, then the dynamical system $(X,f)$ is freely expansive if there is some $c >  0$ such that for every  distinct points $x,y \in X$ there is some free ultrafilter $p$ over $\mathbb{N}$ such that either $d(f^p(x), f^p(y)) > c$ or $d(f^{-p}(x), f^{-p}(y)) > c$, where $f^p$ is the $p$-iterate of $f$ for the future, and $f^{-p}$ is the $p$-iterate of $f$ for the past, the existence of $p$-iterate is guaranteed by the compactness of the space. In their notes, the authors demonstrate that these dynamics are equivalent of dynamical systems such that for all different $x,y \in X$ the set  $\lbrace n \in \mathbb{Z} : d(f^n(x), f^n(y) > c \rbrace$ is infinite when $X$ is compact. Another important  fact is that freely expansive homeomorphism are equivalent to an expansive homeomorphism without doubly asymptotic points.\\\\
\emph{Why use nonstandard analysis?}\\
Suppose that $(X,f)$ is a dynamical system such that  for all $x,y \in X$, $x \neq y$ the set $\lbrace n \in \mathbb{Z} : d(f^n(x), f^n(y) > c \rbrace$ is infinite, and suppose without lost of generality that is in a succession of positive integers $(\lambda_n)_{n \in \mathbb{Z}^{+}}$; that is, \\\\
$d(f^{\lambda_1}(x), f^{\lambda_1}(y)) > c, d(f^{\lambda_2}(x), f^{\lambda_2}(y)) > c, \ldots, d(f^{\lambda_n}(x), f^{\lambda_n}(y)) > c, \ldots$.\\
Consider the  succession of constant succession $(x)_{n \in \mathbb{Z}^{+}}, (y)_{n \in \mathbb{Z}^{+}} (c)_{n \in \mathbb{Z}^{+}}$, and the succession of the moments of separations $(\lambda_n)_{n \in \mathbb{Z}^{+}}$.  In the construction of the nonstandard extension of a set via a free ultrafilter \cite{loebwolf}, this succession corresponds  (respectively) to the points $\mathsf{x}$, $\mathsf{y}$ in ${^*}X$; nonstandard extension of $X$, $\mathsf{c}$  in ${^*}\mathbb{R}$; nonstandard extension of $\mathbb{R}$, and  $\mathsf{\lambda}$ in ${^*}\mathbb{Z}$; and nonstandard extension of $\mathbb{Z}$,  such that\\
$${^*}d({^*}f^{\mathsf{\lambda}}(\mathsf{x}),{^*}f^{\mathsf{\lambda}}(\mathsf{y})) > \mathsf{c}$$.\\
Where ${^*}d : {^*}X \times {^*}X \rightarrow {^*}\mathbb{R}$ is the nonstandard extension of the metric $d$, and ${^*}f : {^*}X  \rightarrow {^*}X$ the nonstandard extension of the homeomorphism $f$. Note that we never use the compactness of $X$.\\
We will use nonstandard analysis methods to study these dynamics. We will see that nonstandard analysis is particularly useful to study these dynamics where asymptotic behavior is crucial. Furthermore, this approach allows us to introduce new concepts and questions.

\section{Preliminaries of expansive systems}

\begin{defi}\label{defi1} Let $X$ be a metric space and $f : X \rightarrow X$ be a homeomorphism. We say that $f$ is \emph{expansive} if exists $c \in \mathbb{R^{+}}$ such that for all $x,y \in X$, $x \neq y$, exists $n \in \mathbb{Z}$, such that $d(f^n(x), f^n(y)) > c$.
\end{defi}

\begin{defi} Let $X$ and $Y$ be compact metric spaces and $f : X \rightarrow X$, $g : Y \rightarrow Y$ be two homeomorphisms. We say that the dynamics $(X, f)$, $(Y, g)$  are \emph{conjugated} if  $\varphi : X \rightarrow Y$ homeomorphisms exist such that $\varphi^{-1} .g.\varphi = f$.
\end{defi}

\begin{prop} Let $X$ and $Y$ be  compact metric spaces, and $(X,f)$, $(Y, g)$ be conjugated dynamics. If  $f : X \rightarrow X$ is expansive,  then  $g$ is also expansive.
\end{prop}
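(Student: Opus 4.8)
The plan is to exploit the conjugacy relation $\varphi^{-1}\circ g\circ\varphi=f$ together with the compactness of $Y$. Write $d_X$ and $d_Y$ for the metrics of $X$ and $Y$, and let $c>0$ be an expansivity constant for $f$. The first step is to rewrite the hypothesis as an intertwining relation: from $\varphi^{-1}\circ g\circ\varphi=f$ one gets $g\circ\varphi=\varphi\circ f$, and since $\varphi$ is a homeomorphism the same identity applied to $\varphi^{-1}$ gives $g^{-1}\circ\varphi=\varphi\circ f^{-1}$; a routine induction then yields $g^{n}\circ\varphi=\varphi\circ f^{n}$ for every $n\in\mathbb{Z}$.

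The second step is the only place compactness is used. Since $Y$ is compact and $\varphi^{-1}\colon Y\to X$ is continuous, it is uniformly continuous. Applying uniform continuity with $\varepsilon=c$ produces $\delta>0$ such that for all $a,b\in Y$, if $d_Y(a,b)<\delta$ then $d_X(\varphi^{-1}(a),\varphi^{-1}(b))<c$; contrapositively, $d_X(\varphi^{-1}(a),\varphi^{-1}(b))>c$ forces $d_Y(a,b)\geq\delta$. I would then set $c'=\delta/2$ and claim that $c'$ is an expansivity constant for $g$.

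For the last step, take distinct $y_1,y_2\in Y$. Since $\varphi$ is injective, $\varphi^{-1}(y_1)\neq\varphi^{-1}(y_2)$, so expansivity of $f$ provides $n\in\mathbb{Z}$ with $d_X\bigl(f^{n}(\varphi^{-1}(y_1)),f^{n}(\varphi^{-1}(y_2))\bigr)>c$. By the intertwining relation $f^{n}(\varphi^{-1}(y_i))=\varphi^{-1}(g^{n}(y_i))$, hence $d_X\bigl(\varphi^{-1}(g^{n}(y_1)),\varphi^{-1}(g^{n}(y_2))\bigr)>c$, and the previous step gives $d_Y(g^{n}(y_1),g^{n}(y_2))\geq\delta>c'$, which is exactly what expansivity of $g$ requires.

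I do not expect a genuine obstacle: the argument is essentially bookkeeping with the conjugacy identity. The one point deserving care — and the reason the statement is phrased for compact spaces — is that mere continuity of $\varphi^{-1}$ would only yield a separation constant depending on the pair $(y_1,y_2)$, whereas uniform continuity, guaranteed by compactness, delivers a single $c'$ valid for all pairs. It is also worth noting that the argument is symmetric in $(X,f,\varphi)$ and $(Y,g,\varphi^{-1})$, so conjugacy in fact makes expansiveness an equivalence-invariant property.
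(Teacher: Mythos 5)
Your proof is correct and follows essentially the same route as the paper: both use the conjugacy identity $g^{n}\circ\varphi=\varphi\circ f^{n}$ together with uniform continuity of $\varphi^{-1}$ (from compactness) to transport the expansivity constant of $f$ to one for $g$. The only cosmetic difference is that you argue the direct implication (distinct points get separated) while the paper argues the contrapositive (never-separated points coincide).
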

\begin{proof}
Suppose that $f$ is expansive with  constant of expansivity $\alpha$. We will prove that $g=\varphi.f.\varphi^{-1}$ is expansive; that is, will find a $c\in \mathbb{R}$, $c >0$ such that for all  $a,b \in Y$, different, $d(g^n(a),g^n(b)) < c$ is verified for all $n\in \mathbb{Z}$, then $a=b$.\\
$\varphi^{-1}$ is uniform continuity because $X$ is compact, then $c\in \mathbb{R}$, $c>0$ exists such that if $d(y,y') < c$, then  $d(\varphi^{-1}(y),\varphi^{-1}(y')) < \alpha$. Suppose that for $a, b \in Y$ we have $d(g^n(a),g^n(b)) < c$ for all $n \in \mathbb{Z}$. Then,  $d(\varphi^{-1}(g^n(a),\varphi^{-1}(g^n(b))) < \alpha$ for all $n \in \mathbb{Z}$, but  $a= \varphi(x)$ y $b = \varphi (x')$ for some $x,x' \in X$. Then, $d(\varphi^{-1}(g^n(\varphi(x)),\varphi^{-1}(g^n(\varphi(x')))) < \alpha$, but $\varphi^{-1}(g^n(\varphi(z)) = f^n(z)$ for all $z \in X$. Then,  $d(f^n(x),f^n(x')) <\alpha$ for all  $n\in \mathbb{Z}$. If $f$ is $\alpha$-expansive, then we have  $x=x'$ and then $a=b$.
\end{proof}

\begin{prop} Let $X$ be a compact metric space and $f : X \rightarrow X$ be expansive homeomorphisms; then for all  $k \in \mathbb{N}$, $f^k$ is expansive.
\end{prop}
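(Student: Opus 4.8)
The plan is to transfer the expansivity constant of $f$ into one for $f^k$ by exploiting the uniform continuity of the finitely many iterates $f^0, f^1, \dots, f^{k-1}$, which is available precisely because $X$ is compact. The point is that a pair of distinct points is separated by $f$ at \emph{some} time $n$, not necessarily a multiple of $k$; uniform continuity lets us push this separation back to the nearest multiple of $k$ without destroying it.

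First I would fix an expansivity constant $c > 0$ for $f$ as in Definition \ref{defi1}. Since $X$ is compact, each iterate $f^j$ with $0 \le j \le k-1$ is uniformly continuous, so I can choose $\delta > 0$ such that $d(a,b) \le \delta$ implies $d(f^j(a), f^j(b)) < c$ simultaneously for every $j \in \{0, 1, \dots, k-1\}$ — just take the minimum of the $k$ moduli of continuity (the case $j=0$ being trivial). I claim this $\delta$ is an expansivity constant for $f^k$.

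To verify the claim, take $x \neq y$ in $X$. By expansivity of $f$ there is $n \in \mathbb{Z}$ with $d(f^n(x), f^n(y)) > c$. Writing $n = mk + j$ with $m \in \mathbb{Z}$ and $j \in \{0, \dots, k-1\}$ (integer division, valid for negative $n$ as well), we have $f^n = f^j \circ (f^k)^m$, hence $d\bigl(f^j((f^k)^m(x)), f^j((f^k)^m(y))\bigr) > c$. By the contrapositive of the choice of $\delta$, this forces $d\bigl((f^k)^m(x), (f^k)^m(y)\bigr) > \delta$. Thus every pair of distinct points is separated by more than $\delta$ under some iterate of $f^k$, so $f^k$ is expansive.

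The only genuine subtlety, and the single place compactness is used, is this last passage from a separation at an arbitrary time $n$ to a separation at a multiple of $k$: uniform continuity absorbs the residue $j \in \{0,\dots,k-1\}$ while keeping the separation above the level $\delta$. No further estimates are needed, and the constant $\delta$ can in general be strictly smaller than $c$, which is expected.
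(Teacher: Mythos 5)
Your proof is correct and uses the same two ingredients as the paper's: a single $\delta$ obtained from the uniform continuity of the finitely many iterates $f^0,\dots,f^{k-1}$, and the division $n=mk+j$ to push an arbitrary separation time to a multiple of $k$. The only difference is presentational — you argue directly via the contrapositive, while the paper runs the identical estimate as a proof by contradiction — so this matches the paper's proof.
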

\begin{proof} Let $c$ be the constant of expansivity of $f$, for the uniformly continuous we know that $\delta > 0$ exists such that  $d(x,y) < \delta$, then $d(f^i(x),f^i(y)) < c$ for all $i= 1, \ldots k$. Suppose that $f^k$ is not  expansive, then $x_0,y_0 \in X$  $m\in \mathbb{Z}$ exists such that $d(f^{mk}(x_0),f^{mk}(y_0)) < \delta$. Let $n \in \mathbb{Z}$, then for the Euclid theorem exists $m, i \in \mathbb{Z}$ , with  $0\leq i < k$ such that $n=mk + i$, then  $d(f^n(x_0), f^n(y_0)) = d(f^{mk + i}(x_0),f^{mk + i}(y_0)) = d(f^i(f^{mk}(x_0)),f^i(f^{mk}(y_0))) < c$, then  $f$ is not expansive, which is absurd.
\end{proof}

Uniform expansiveness is a useful concept in the theory of expansive dynamics.
\begin{defi}
Let $X$ be a compact metric space and  $f : X \rightarrow X$ be homeomorphisms. We say that the dynamical system $(X,f)$ is $c$-uniformly expansive if for all $\epsilon > 0$ exists $n_{\epsilon} \in \mathbb{N}$ such that $x,y \in X$, if  $d(x,y) > \epsilon$ then $d(f^i(x),f^i(y)) > c$, for some $i \in \mathbb{Z}$, $\abs{i} < n_{\epsilon}$.
\end{defi}

\begin{prop} 
Let $X$ be a compact metric space and $f : X \rightarrow X$ be homeomorphisms. If  $(X,f)$ is $c$-expansive, then $f$ is $c$-uniformly expansive.
\end{prop}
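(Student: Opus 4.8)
The plan is to argue by contradiction, exploiting the compactness of $X\times X$ together with the continuity of each iterate $f^i$. Suppose $(X,f)$ is $c$-expansive but fails to be $c$-uniformly expansive. Then there is a single $\epsilon>0$ witnessing the failure: for every $n\in\mathbb{N}$ there exist points $x_n,y_n\in X$ with $d(x_n,y_n)>\epsilon$ but $d(f^i(x_n),f^i(y_n))\le c$ for all $i\in\mathbb{Z}$ with $\abs{i}<n$.

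First I would pass to a limit. Since $X\times X$ is compact, the sequence $(x_n,y_n)_{n\in\mathbb{N}}$ has a convergent subsequence; relabelling, assume $x_n\to x$ and $y_n\to y$. Continuity of $d$ and the bound $d(x_n,y_n)>\epsilon$ give $d(x,y)\ge\epsilon>0$, so $x\ne y$. Next I would check that the limit pair is never $c$-separated: fix $i\in\mathbb{Z}$; for every $n$ in the subsequence with $n>\abs{i}$ we have $d(f^i(x_n),f^i(y_n))\le c$, and since $f^i$ and $d$ are continuous, letting $n\to\infty$ yields $d(f^i(x),f^i(y))\le c$. As $i$ was arbitrary, $d(f^i(x),f^i(y))\le c$ for all $i\in\mathbb{Z}$, contradicting $c$-expansiveness applied to the distinct points $x,y$. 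Hence no such $\epsilon$ exists and $f$ is $c$-uniformly expansive.

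The only point that needs care is the limiting step: for a fixed index $i$ the inequality $d(f^i(x_n),f^i(y_n))\le c$ is only guaranteed once $n>\abs{i}$, so one must invoke it along the tail of the subsequence rather than for every $n$ — this is precisely why the window $\abs{i}<n$ appears in the negated statement. One also keeps in mind that limits preserve the non-strict inequality $\le c$, and that this already contradicts expansiveness, which only asserts that \emph{some} iterate is strictly larger than $c$. Everything else is routine. In the spirit of the present paper, the argument admits an equivalent nonstandard phrasing: the negation produces, for some infinite $N\in{}^*\mathbb{N}$, a hyperreal pair whose ${}^*d$-distance exceeds $\epsilon$ while its ${}^*f^i$-iterates remain within $c$ for all $\abs{i}<N$; taking standard parts (which exist by compactness) returns a genuine pair of distinct points of $X$ with $d(f^i(x),f^i(y))\le c$ for every standard $i$, yielding the same contradiction.
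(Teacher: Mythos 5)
Your proof is correct and follows essentially the same route as the paper's: negate uniform expansiveness, extract a convergent subsequence by compactness, and pass to the limit in $d(f^i(x_n),f^i(y_n))\le c$ for each fixed $i$ along the tail $n>\abs{i}$ to contradict $c$-expansiveness. You are slightly more careful than the paper in explicitly noting $d(x,y)\ge\epsilon$ so that $x\ne y$, but the argument is the same.
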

\begin{proof}
Suppose that  $f$ is not $c$-uniformly expansive, then $\epsilon > 0$ exists such that for all  $n$ exists  $x_n, y_n \in X$, such that  $d(x_n, y_n) > \epsilon$ and $d(f^i(x_n),f^i(y_n)) \leq c$ for all $i \in \mathbb{Z}$, with  $\abs{i} < n$, then how  $X$ is compact. Taking subsuccession if necessary, we can suppose that $\lim_{n \rightarrow + \infty} x_n = x$, $\lim_{n \rightarrow + \infty} y_n = y$ , for some $x,y \in X$. Let $k \in \mathbb{Z}$, then how  $\lim_{n \rightarrow + \infty} x_n = x$, we have  $\lim_{n \rightarrow + \infty} f^k(x_n) = f^k(x)$ y $\lim_{n \rightarrow + \infty} f^k(y_n) = f^k(y)$, taken $\delta > 0$, $n_0$ exists such that $n_0 > \abs{k}$, y $d(f^k(x), f^k(x_n)) < \frac{\delta}{2}$, $d(f^k(y), f^k(y_n)) < \frac{\delta}{2}$ for all $n \geq n_0$, then  \\
$d(f^k(x), f^k(y)) \leq d(f^k(x), f^k(x_{n_0})) + d(f^k(x_{n_0}), f^k(y_{n_0})) + d(f^k(y_{n_0}), f^k(y)) < \frac{\delta}{2} + c + \frac{\delta}{2} = c + \delta$, then \\ $d(f^k(x), f^k(y)) < c + \delta$ for all  $\delta > 0$, then $d(f^k(x), f^k(y)) \leq c$, for all $k \in \mathbb{Z}$, then  $f$ is not $c$-expansive.
\end{proof}

\section{Nonstandard expansiveness}

\begin{defi} Let $f : X \rightarrow X$ be homeomorphisms. We say that $f$ is  \emph{nonstandard expansive} if  $c \in \mathbb{R}$, $c > 0$, exists such that for all $x,y \in X$, in contrast $n \in {^*}\mathbb{Z}_{\infty}$ exists such that ${^*}d(({^*}f)^n(x), ({^*}f)^n(y)) > c$, where ${^*}\mathbb{Z}_{\infty}$ is a set of infinite integers of $\mathbb{Z}$.
\end{defi}

\begin{teo}\label{caractcombnonstandarexpansivos} Let $f : X \rightarrow X$ be homeomorphisms, $f$ is nonstandard expansive with  constant of expansivity $c$ if only for all $x, y \in X$, in contrast the set $\lbrace n \in \mathbb{Z} : d(f^n(x), f^n(y)) > c \rbrace$ is infinite.
\end{teo}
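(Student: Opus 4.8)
The plan is to derive both implications from the transfer principle, once the combinatorial condition is rephrased as an unboundedness statement that is first-order in suitable parameters. Fix $c>0$ and, for distinct $x,y\in X$, write $A_{x,y}=\{n\in\mathbb Z : d(f^n(x),f^n(y))>c\}$. The key elementary observation is that a subset $A\subseteq\mathbb Z$ is infinite if and only if it is unbounded in absolute value (since each $\{m\in\mathbb Z:|m|\le k\}$ is finite); applied to $A_{x,y}$ this says that \emph{$A_{x,y}$ is infinite} is equivalent to
\[
(\star)\qquad \forall k\in\mathbb N\ \ \exists n\in\mathbb Z\ \bigl(\,|n|>k\ \wedge\ d(f^n(x),f^n(y))>c\,\bigr),
\]
which, read in the structure carrying $\mathbb Z$, $X$, $\mathbb R$, the map $(n,x)\mapsto f^n(x)$ and the metric $d$, is a legitimate first-order sentence with the points $x,y$ and the real $c$ as parameters.

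For the direction ``$(\star)$ for all distinct $x,y$ $\Rightarrow$ $f$ nonstandard expansive'': fix distinct $x,y\in X$ and an infinite $k_0\in{}^*\mathbb N$. Transferring $(\star)$ and instantiating the outer quantifier at $k_0$ produces $n\in{}^*\mathbb Z$ with $|n|>k_0$ and ${}^*d\bigl(({}^*f)^n(x),({}^*f)^n(y)\bigr)>c$. Since $|n|>k_0$ and $k_0$ exceeds every standard integer, $n\in{}^*\mathbb Z_\infty$, so the required infinite separation time exists and $f$ is nonstandard expansive with constant $c$.

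For the converse: assume $f$ is nonstandard expansive with constant $c$, fix distinct $x,y\in X$, and let $n\in{}^*\mathbb Z_\infty$ satisfy ${}^*d\bigl(({}^*f)^n(x),({}^*f)^n(y)\bigr)>c$. Fix an arbitrary standard $k\in\mathbb N$. Because $n$ is infinite we have $|n|>k$, so $m=n$ witnesses the internal statement $\exists m\in{}^*\mathbb Z\,\bigl(|m|>k\wedge{}^*d(({}^*f)^m(x),({}^*f)^m(y))>c\bigr)$; as $k$, $x$, $y$, $c$ are standard, transfer (in the direction from the $*$-structure to the standard one) yields $\exists m\in\mathbb Z\,\bigl(|m|>k\wedge d(f^m(x),f^m(y))>c\bigr)$. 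Since $k\in\mathbb N$ was arbitrary, $(\star)$ holds, i.e.\ $A_{x,y}$ is infinite.

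The argument is, in essence, bookkeeping with transfer, so I do not expect a genuine obstacle; the two points that need care are (i) verifying that $(\star)$ is a bona fide first-order sentence over the chosen structure with $x,y,c$ as parameters, so that transfer legitimately applies in both directions, and (ii) treating positive and negative infinite integers uniformly — this is precisely why $(\star)$ is phrased with $|n|$ rather than with $n$ alone, an infinite subset of $\mathbb Z$ being only guaranteed unbounded in one direction. Note that, as promised in the introduction, compactness of $X$ is never used.
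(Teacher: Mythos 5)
Your proof is correct and follows essentially the same route as the paper: both reduce ``the separation set is infinite'' to unboundedness in $\mathbb{Z}$ and then apply the transfer principle in each direction. The only differences are organizational --- you transfer the $\forall\exists$ sentence $(\star)$ directly and treat both signs at once via $\abs{n}$, whereas the paper Skolemizes with a choice function $\psi$ before transferring, argues one direction by contraposition (a finite separation set is eventually bounded by $c$, and this bound transfers), and handles positive and negative iterates separately by passing to $f^{-1}$.
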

\begin{proof} $\Rightarrow$: Suppose that  $\lbrace n \in \mathbb{Z} : d(f^n(x), f^n(y)) > c \rbrace$  is finite, then exists  $m \in \mathbb{N}$ such that \\
$(\forall n \in \mathbb{N})(n \geq \underline{m} \rightarrow d(f^n(\underline{x}),f^n(\underline{y})) \leq \underline{c})$. Then, for the transfer principle the following formula is true \\ $(\forall n \in {^*}\mathbb{N})(n \geq \underline{m} \rightarrow {^*}d( {^*}f^n(\underline{x}),{^*}f^n(\underline{y})) \leq \underline{c})$; in particular, for all positive integer $n$, $ {^*}d({^*}f^n(x),{^*}f^n(y)) \leq c$, Analogously  $\lbrace n \in \mathbb{Z}_{\leq 0} : d(f^n(x), f^n(y)) > c \rbrace$ and change  $f$ for $f^{-1}$ we have that for all infinite negative integer $n$ $d(f^n(x),f^n(y)) \leq c$, then is not nonstandard expansive.\\\\
$\Leftarrow$: Suppose that  $\lbrace n \in \mathbb{N} : d(f^n(x), f^n(y)) >c \rbrace$ is finite. If it does not work with  $f^{-1}$, then for all $n\in \mathbb{N}$ exists $m >n$ such that $d(f^m(x),f^m(y)) > \underline{c}) $ then there exists a function  $\psi : \mathbb{N} \rightarrow \mathbb{N}$ such that $\forall n \in \mathbb{N} ((\psi(n) >  n) \wedge d(f^{\psi(n)}(\underline{x}),f^{\psi(n)}(\underline{y})) > \underline{c})$. Then, for transfer principle  $\forall n \in {^*}\mathbb{N} (({^*}\psi(n) >  n) \wedge d(f^{{^*}\psi(n)}(\underline{x}),f^{{^*}\psi(n)}(\underline{y})) > \underline{c})$. Then, if $m\in {^*}\mathbb{N}_{\infty}$ \footnote{${^*}\mathbb{N}_{\infty}$ is the set of infinite numbers of ${^*}\mathbb{N}$}. Then,  ${^*}\psi(m) \in  {^*}\mathbb{N}_{\infty}$. Consequently, $f$ is nonstandard expansive.
\end{proof}

\begin{remark} (\emph{Strongly nonstandard expansive?}) A natural question is what happens if in the definition of nonstandard expansive instead of quantifying on $X$ we quantify on ${^*}X$? That is, we could define  $f : X \rightarrow X$ as $c$-strongly nonstandard expansive if only for all $x,y \in {^*}X$, $x \neq y$ exists $n \in {^*}\mathbb{Z}_{\infty}$ such that ${^*}d(({^*}f)^n(x), ({^*}f)^n(y)) > c$. It is clear that this stronger a priori version implies the nonstandard expansiveness, we will see that the reciprocal is true.\\
For the above theorem, the following formula is true:\\
$(\forall x \in X)(\forall y\in X)(x \neq y \rightarrow (\forall n \in \mathbb{N})(\exists i \in \mathbb{Z})(\abs{i} > n)\wedge(d(f^i(x),f^i(y)) > c)$. Then, for the transfer principle the formula\\
$(\forall x \in {^*}X)(\forall y\in {^*}X)(x \neq y \rightarrow (\forall n \in {^*}\mathbb{N})(\exists i \in {^*}\mathbb{Z})(\abs{i} > n)\wedge({^*}d(f^i(x),f^i(y)) > c)$ is also true. Then, for any  $x,y \in {^*}X$ and  $n \in {^*}\mathbb{N}_{\infty}$ exists $i \in \mathbb{Z}$ , with $\abs{i} > n$ such that $d(f^i(x),f^i(y))>c$, but  $n \in {^*}\mathbb{N}$, and then  $i \in {^*}\mathbb{Z}_{\infty}$. This proves that they are separated into infinite numbers.
\end{remark}

The next two propositions state that the nonstandard expansive dynamics verified the same properties of invariant by potence and conjugation as the expansive dynamics. These facts were proven by Groisman and da Silva \cite{jorgesamuel} in the context of the theory of ultrafilters. We give a proof with nonstandard methods.

\begin{prop} Let $X$ be a compact metric space. If $f : X \rightarrow X$ is nonstandard expansive, then for all $k \in \mathbb{N}$, $f^k$ is nonstandard expansive.
\end{prop}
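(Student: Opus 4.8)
\vspace{0.3cm}

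The plan is to reuse the argument from the standard case (that $f$ expansive implies $f^k$ expansive): the new constant of expansivity for $f^k$ will be extracted from the uniform continuity of the finitely many iterates $f^0, f^1, \dots, f^{k-1}$, and only the combinatorial characterization of Theorem~\ref{caractcombnonstandarexpansivos} (or, equivalently, one more application of transfer) will be needed to convert separation at an infinite time $N$ into separation at an infinite time that is a multiple of $k$.

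First I would fix, using compactness of $X$, a constant $\delta > 0$ such that for every $r \in \{0,1,\dots,k-1\}$ and every $u,v \in X$, if $d(u,v) \le \delta$ then $d(f^r(u), f^r(v)) \le c$, where $c$ is the constant witnessing the nonstandard expansiveness of $f$. By the transfer principle this implication remains valid for ${^*}d$, $({^*}f)^r$ and $u,v \in {^*}X$, since ${^*}\{0,1,\dots,k-1\} = \{0,1,\dots,k-1\}$; its contrapositive reads: if ${^*}d\big(({^*}f)^r(u), ({^*}f)^r(v)\big) > c$ for some $0 \le r < k$, then ${^*}d(u,v) > \delta$.

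Next, given distinct $x,y \in X$, nonstandard expansiveness of $f$ yields $N \in {^*}\mathbb{Z}_{\infty}$ with ${^*}d\big(({^*}f)^N(x), ({^*}f)^N(y)\big) > c$. By transfer of Euclidean division I would write $N = kM + R$ with $R \in {^*}\{0,1,\dots,k-1\} = \{0,1,\dots,k-1\}$; since $R$ and $k$ are finite while $N$ is infinite, $M = (N-R)/k \in {^*}\mathbb{Z}_{\infty}$. From $({^*}f)^N = ({^*}f)^R \circ ({^*}f)^{kM}$ and the contrapositive above, applied to $u = ({^*}f)^{kM}(x)$ and $v = ({^*}f)^{kM}(y)$, we get ${^*}d\big(({^*}f)^{kM}(x), ({^*}f)^{kM}(y)\big) > \delta$. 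Since $({^*}f)^{kM} = \big({^*}(f^k)\big)^{M}$ and $M \in {^*}\mathbb{Z}_{\infty}$, this is precisely the assertion that $f^k$ is nonstandard expansive with constant $\delta$.

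I do not expect a serious obstacle here; the only points needing a little care are that the remainder $R$ lands in the \emph{standard} finite set $\{0,\dots,k-1\}$ — so that ``undoing'' the exponent $N$ modulo $k$ involves only the standardly many uniformly continuous maps $f^0,\dots,f^{k-1}$ and not an infinite iterate of $f$ — and that subtracting the finite remainder and dividing by the finite integer $k$ keeps $M$ infinite. Equivalently, one can avoid nonstandard language and argue directly through Theorem~\ref{caractcombnonstandarexpansivos}: the map $n \mapsto \lfloor n/k \rfloor$ has all fibers of size $k$, hence sends the infinite set $\{\,n \in \mathbb{Z} : d(f^n(x), f^n(y)) > c\,\}$ onto an infinite set of integers $q$, each of which satisfies $d(f^{kq}(x), f^{kq}(y)) > \delta$ by the same uniform-continuity estimate, so $\{\,q \in \mathbb{Z} : d(f^{kq}(x), f^{kq}(y)) > \delta\,\}$ is infinite.
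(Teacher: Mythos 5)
Your proposal is correct and follows essentially the same route as the paper: both extract $\delta$ from the uniform continuity of the finitely many iterates $f^0,\dots,f^{k-1}$, apply the transferred Euclidean division to write an infinite separation time $N$ as $kM+R$ with $M$ still infinite, and conclude separation at time $kM$ with the new constant $\delta$. The only cosmetic difference is that the paper argues by contradiction (assuming $f^k$ is not nonstandard expansive) while you argue directly via the contrapositive of the uniform-continuity implication; your closing combinatorial reformulation via Theorem~\ref{caractcombnonstandarexpansivos} is a nice extra but not needed.
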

\begin{proof}
Let $c$ be  the constant of expansivity of $f$. By uniform continuity, we know that  $\delta > 0$ exists such that if $d(x,y) < \delta$, then $d(f^i(x),f^i(y)) < c$ for all $i= 1, \ldots k$. Suppose that  $f^k$ is not nonstandard expansive, then $x_0,y_0 \in X$ exists such that for all $m\in {^*}\mathbb{Z}_{\infty}$,  $d(f^{mk}(x_0),f^{mk}(y_0)) < \delta$. Let $n \in {^*}\mathbb{Z}$, then for the Euclidean theorem (in a nonstandard version \footnote{ For all $a,b \in {^*}\mathbb{Z}$, $b >0$, exist $q, r \in {^*}\mathbb{Z}$ unique such that $a =bq + r$, with $0 \leq r < \abs{b}$. The proof is a simple application of transfer principle}) $m, i \in {^*}\mathbb{Z}$  exists, with $0\leq i < k$ such that $n=mk + i$, but if  $n \in {^*}\mathbb{Z}_{\infty}$, then $m \in {^*}\mathbb{Z}_{\infty}$, and then  $d(f^n(x_0), f^n(y_0)) = d(f^{mk + i}(x_0),f^{mk + i}(y_0)) = d(f^i(f^{mk}(x_0)),f^i(f^{mk}(y_0))) < c$, thus $f$ is not nonstandard expansive, which is absurd.
\end{proof}

\begin{prop} Let $X$, $Y$ be a metric compact spaces, and  $(X, f)$ and $(Y, g)$ be conjugated dynamics. If  $f : X \rightarrow X$ and if $f$ is nonstandard expansive, then  $g : Y \rightarrow Y$ is nonstandard expansive.
\end{prop}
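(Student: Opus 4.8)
The plan is to imitate the classical argument for conjugated expansive homeomorphisms (Proposition~2.1), replacing the single separating iterate by an iterate indexed by an infinite integer, and moving the uniform-continuity estimate to ${}^{*}X$, ${}^{*}Y$ via the transfer principle. Let $\varphi:X\to Y$ be the conjugacy, so $\varphi^{-1}g\varphi=f$ and hence $\varphi^{-1}g^{n}\varphi=f^{n}$ for every $n\in\mathbb{Z}$; written as the first-order sentence $(\forall n\in\mathbb{Z})(\forall z\in X)\,\varphi^{-1}(g^{n}(\varphi(z)))=f^{n}(z)$, it transfers to $(\forall n\in{}^{*}\mathbb{Z})(\forall z\in{}^{*}X)\,{}^{*}\varphi^{-1}({}^{*}g^{n}({}^{*}\varphi(z)))={}^{*}f^{n}(z)$, so the conjugacy relation persists for infinite exponents $n\in{}^{*}\mathbb{Z}_{\infty}$.

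Next I would fix the constant of expansivity $c$ of $f$ and exploit compactness of $Y$: the continuous map $\varphi^{-1}$ is uniformly continuous, so there is $c'>0$ such that $d_{Y}(y,y')\le c'$ implies $d_{X}(\varphi^{-1}(y),\varphi^{-1}(y'))<c$ for all $y,y'\in Y$. I deliberately take the non-strict hypothesis $\le c'$ here; this is the standard device for the strict/non-strict bookkeeping at the end. By transfer the same implication holds with ${}^{*}d_{Y}$, ${}^{*}d_{X}$, ${}^{*}\varphi^{-1}$ for all $w,w'\in{}^{*}Y$. I claim $g$ is nonstandard expansive with constant $c'$.

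To verify the claim, take distinct $a,b\in Y$ and put $x=\varphi^{-1}(a)$, $y=\varphi^{-1}(b)$; since $\varphi$ is a bijection, $x\neq y$ in $X$. As $f$ is nonstandard expansive with constant $c$, there is $n\in{}^{*}\mathbb{Z}_{\infty}$ with ${}^{*}d_{X}({}^{*}f^{n}(x),{}^{*}f^{n}(y))>c$. Using the transferred conjugacy identity together with ${}^{*}\varphi(x)=\varphi(x)=a$ and ${}^{*}\varphi(y)=b$, we get ${}^{*}f^{n}(x)={}^{*}\varphi^{-1}({}^{*}g^{n}(a))$ and ${}^{*}f^{n}(y)={}^{*}\varphi^{-1}({}^{*}g^{n}(b))$, hence ${}^{*}d_{X}\bigl({}^{*}\varphi^{-1}({}^{*}g^{n}(a)),{}^{*}\varphi^{-1}({}^{*}g^{n}(b))\bigr)>c$. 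The contrapositive of the transferred uniform-continuity implication then forces ${}^{*}d_{Y}({}^{*}g^{n}(a),{}^{*}g^{n}(b))>c'$, and since $n\in{}^{*}\mathbb{Z}_{\infty}$ this is exactly the infinite separating integer required for $g$.

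The only delicate points are bookkeeping: (i) that the conjugacy relation transfers to infinite exponents (handled once it is phrased as a sentence quantified over $n\in\mathbb{Z}$), (ii) that ${}^{*}\varphi$ agrees with $\varphi$ on standard points and ${}^{*}(\varphi^{-1})$ is the inverse of ${}^{*}\varphi$, and (iii) the strict/non-strict inequality juggling, absorbed into the choice of $c'$. Alternatively one can avoid the nonstandard machinery and invoke Theorem~\ref{caractcombnonstandarexpansivos}: for distinct $a,b\in Y$ with $x=\varphi^{-1}(a)$, $y=\varphi^{-1}(b)$, the uniform-continuity implication gives, contrapositively, $\{n\in\mathbb{Z}:d_{Y}(g^{n}(a),g^{n}(b))>c'\}\supseteq\{n\in\mathbb{Z}:d_{X}(f^{n}(x),f^{n}(y))>c\}$, and the latter set is infinite because $f$ is nonstandard expansive, so the former is infinite and $g$ is nonstandard expansive with constant $c'$. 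Either way I expect the write-up to be routine, with no substantial obstacle beyond the transfer of the conjugacy identity.
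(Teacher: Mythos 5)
Your proposal is correct and follows essentially the same route as the paper: transfer the conjugacy relation $\varphi^{-1}g^{n}\varphi=f^{n}$ to infinite exponents and use uniform continuity of $\varphi^{-1}$ on the compact space to convert separation of the $f$-orbits in $X$ into separation of the $g$-orbits in $Y$; the paper merely phrases this contrapositively (assuming $d(g^{n}(a),g^{n}(b))<c$ for all infinite $n$ and concluding $a=b$), while you argue directly and handle the strict/non-strict inequalities more carefully. Your alternative via Theorem~\ref{caractcombnonstandarexpansivos} is also valid, but the main argument already matches the paper's.
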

\begin{proof}
Let  $\varphi : X \rightarrow Y$ be a homeomorphism and  $f : X \rightarrow X$ be nonstandard expansive with constant of expansivity  $\alpha$. We will prove that $g=\varphi.f.\varphi^{-1}$ is nonstandard expansive;  that is, we find $c\in \mathbb{R}$, $c >0$ such that for all  $a,b \in Y$, if  $d(g^n(a),g^n(b)) < c$ is satisfied for all $n\in {^*}\mathbb{Z}_{\infty}$, then  $a=b$.\\
For the transfer principle, if the following diagram is conmute
$\xymatrix{
 X \ar [r]^{f} \ar[d]_{\varphi}  & X \ar [d]^{\varphi} \\ 
 Y \ar [r]_{g} & Y
 }$
then the diagram
$\xymatrix{
 {^*}X \ar [r]^{{^*}f} \ar[d]_{{^*}\varphi}  &{^*} X \ar [d]^{{^*}\varphi} \\ 
 {^*}Y \ar [r]_{{^*}g} & {^*}Y
 }$
is conmute.
If  $X$ is compact and  $\varphi^{-1} : Y \rightarrow X$ is uniformly continuous, then $c\in \mathbb{R}$, $c>0$ exists such that if  $d(y,y') < c$, then $d(\varphi^{-1}(y),\varphi^{-1}(y')) < \alpha$. However, for the transfer principle ${^*}\varphi^{-1}$, we have verified that for all $y,y' \in {^*}Y$ if ${^*}d(y,y') < c$, then ${^*}d({^*}\varphi^{-1}(y),{^*}\varphi^{-1}(y')) < \alpha$. Suppose that for $a, b \in Y$ we have  ${^*}d({^*}g^n(a),{^*}g^n(b)) < c$ for all $n \in {^*}\mathbb{Z}_{\infty}$, then  ${^*}d({^*}\varphi^{-1}({^*}g^n(a),{^*}\varphi^{-1}({^*}g^n(b))) < \alpha$ for all  $n \in {^*}\mathbb{Z}_{\infty}$; but $a= \varphi(x)$ and  $b = \varphi (x')$ for some $x,x' \in X$, then  \\ ${^*}d{^*}(\varphi^{-1}({^*}g^n(\varphi(x)),{^*}\varphi^{-1}({^*}g^n(\varphi(x')))) < \alpha$; but $\varphi^{-1}({^*}g^n(\varphi(z)) = {^*}f^n(z)$ for all $z \in X$, and then  ${^*}d({^*}f^n(x),{^*}f^n(x')) <\alpha$ for all $n\in {^*}\mathbb{Z}_{\infty}$. If $f$ is  $\alpha$-expansive, then we have that $x=x'$, and then $a=b$.
\end{proof}

\begin{prop}  Let $X$ be a metric space and $f :X \rightarrow X$ be homeomorphisms, then 
$x,y \in X$ is positive asymptotic  if for all  $m \in {^*} \mathbb{N}_{\infty}$,  $f^{m}(x) \simeq f^m(y)$ \footnote{Let $X$  be a metric space and $x,y \in {^*}X$, we defined $x \simeq y$ if and only if ${^*}d(x,y)$ is an infinitesimal}.
\end{prop}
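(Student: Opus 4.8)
The plan is to read this as the nonstandard characterization of the limit of a real sequence, applied to $a_n := d(f^n(x), f^n(y))$, $n \in \mathbb{N}$ (I read the statement as an ``if and only if'', in the spirit of the ``if only'' used earlier). First I would record the transferred facts that the iterate $({^*}f)^n$ is a well-defined internal map for every $n \in {^*}\mathbb{N}$ --- the sentence saying that $n \mapsto f^n$ is a function on $\mathbb{N}$ with $f^{n+1} = f \circ f^n$ transfers --- and that the identity $a_n = d(f^n(x), f^n(y))$, true for all $n \in \mathbb{N}$, transfers to ${^*}a_n = {^*}d(({^*}f)^n(x), ({^*}f)^n(y))$ for all $n \in {^*}\mathbb{N}$. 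So the two conditions read, respectively, $\lim_{n \to \infty} a_n = 0$ and ``${^*}a_m \simeq 0$ for every $m \in {^*}\mathbb{N}_{\infty}$'', and it suffices to prove these equivalent.

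For the forward implication, assume $\lim a_n = 0$ and fix a standard $\varepsilon > 0$. There is $N \in \mathbb{N}$ with $(\forall n \in \mathbb{N})(n \ge N \to a_n < \varepsilon)$; by transfer $(\forall n \in {^*}\mathbb{N})(n \ge N \to {^*}a_n < \varepsilon)$, and every $m \in {^*}\mathbb{N}_{\infty}$ satisfies $m \ge N$, hence ${^*}a_m < \varepsilon$. Since $\varepsilon$ ranges over all standard positive reals and ${^*}a_m \ge 0$ (again by transfer), the number ${^*}a_m$ is infinitesimal, i.e. $({^*}f)^m(x) \simeq ({^*}f)^m(y)$.

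For the converse I would argue by contraposition, in parallel with the proof of Theorem~\ref{caractcombnonstandarexpansivos}: if $a_n \not\to 0$ there is a standard $\varepsilon > 0$ with $(\forall N \in \mathbb{N})(\exists n \in \mathbb{N})(n \ge N \wedge a_n \ge \varepsilon)$, hence a function $\psi : \mathbb{N} \to \mathbb{N}$ with $(\forall N)(\psi(N) \ge N \wedge a_{\psi(N)} \ge \varepsilon)$; transferring and evaluating at any $N \in {^*}\mathbb{N}_{\infty}$ yields ${^*}\psi(N) \in {^*}\mathbb{N}_{\infty}$ with ${^*}a_{{^*}\psi(N)} \ge \varepsilon$, so $({^*}f)^{{^*}\psi(N)}(x) \not\simeq ({^*}f)^{{^*}\psi(N)}(y)$, contradicting the hypothesis. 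I do not expect a genuine obstacle; the only point needing care is the bookkeeping that makes $({^*}f)^m$ and ${^*}d(({^*}f)^m(x),({^*}f)^m(y))$ legitimate for infinite $m$ and identifies the latter with the $m$-th term of the extended sequence ${^*}a$. Once that is in place both directions are immediate applications of transfer, and the proposition is exactly the classical statement ``$a_n \to 0$ iff ${^*}a_m \simeq 0$ for all infinite $m$'' (note that compactness of $X$ is not used).
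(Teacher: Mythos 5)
Your proposal is correct and follows essentially the same route as the paper: the forward direction transfers the statement $(\forall n)(n\ge n_\epsilon \to d(f^n(x),f^n(y))<\epsilon)$ for each standard $\epsilon$, and the converse argues by contraposition via a choice function $\psi$ with $\psi(N)\ge N$ and $d(f^{\psi(N)}(x),f^{\psi(N)}(y))\ge\epsilon$, transferred and evaluated at an infinite $N$. The only difference is that you make explicit the bookkeeping identifying ${^*}d(({^*}f)^m(x),({^*}f)^m(y))$ with the $m$-th term of the extended sequence, which the paper leaves implicit.
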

\begin{proof}
$\Rightarrow$: If $\lim\limits _{n \rightarrow + \infty} d(f^n(x), f^n(y))= 0$, then for  $\epsilon \in \mathbb{R}$, $\epsilon > 0$ exists $n_{\epsilon} \in \mathbb{N}$ such that  \\
 $(\forall m \in \mathbb{N})(m \geq \underline{n_{\epsilon}} \rightarrow d(f^m(x),f^m(y)) < \underline{\epsilon)}$. For the transfer principle, the following formula is true.\\
$(\forall m \in {^*}\mathbb{N})(m \geq n_{\epsilon} \rightarrow d(f^m(x),f^m(y)) < \underline{\epsilon})$, si $m \in {^*} \mathbb{N}_{\infty}$ , then  $m > n_{\epsilon}$ for all $\epsilon$; and then  $d(f^m(x),f^m(y)) < \epsilon$, for all $\epsilon$; and then $f^m(x) \simeq f^m(y)$.\\\\
$\Leftarrow$: Suppose that $\lim\limits _{n \rightarrow + \infty} d(f^n(x), f^n(y)) \neq 0$, then exists $\epsilon \in \mathbb{R}$, $\epsilon > 0$, such that for all $n\in \mathbb{N}$,  \\
$(\exists m\in \mathbb{N})( (m \geq n) \wedge d(f^m(x),f^m(y)) > \epsilon)$ is verified. Then by taking a choice function  $\psi : \mathbb{N} \rightarrow \mathbb{N}$, we have \\
 $(\forall n \in \mathbb{N})(\psi(n) > n) \wedge (d(f^{\psi(n)}(x), f^{\psi(n)}(y) > \underline{\epsilon})$ entonces for the transfer principle the formula $(\forall n \in {^*}\mathbb{N})({^*}\psi(n) > n) \wedge (d(f^{{^*}\psi(n)}(x), f^{{^*}\psi(n)}(y))>\underline{\epsilon})$ is true. If  $m \in {^*}\mathbb{N}_{\infty}$, then ${^*}\psi(m) \in {^*}\mathbb{N}_{\infty}$, with $d(f^{\psi(n)}(x), f^{\psi(n)}(y) >\epsilon$, and then   $f^m(x) \not\simeq f^m(y)$. Analogously, for  $x,y$ negative asymptotic.
\end{proof}

\begin{lema}\label{lema}
Let $X$ be a metric space, $a,b \in X$, $a', b' \in {^*}X$,  $r \in \mathbb{R}$, $r > 0$, then the following statements are true.
\begin{enumerate}
\item If ${^*}d(a',b') > r$, then $d(a,b) \geq r$.
\item If $d(a,b) > r$, then $st({^*}d(a',b'))  \geq r$. \footnote{Let $r \in {^*}\mathbb{R}$ be a nonstandard real number, then $st(r)$ is the nonstandard part of $r$. This is the unique $st(r) \in \mathbb{R}$ such that $r = st(r) + \xi$, where $\xi$ is an infinitesimal}
\end{enumerate}
\end{lema}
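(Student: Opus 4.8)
The plan is to reduce both parts to the single observation that, when $a'\simeq a$ and $b'\simeq b$ (which I read as part of the hypotheses; without some such relation the points $a',b'$ say nothing about $a,b$ and the statements fail), one has ${^*}d(a',b')\simeq d(a,b)$, and hence $st({^*}d(a',b'))=d(a,b)$. Both items are then just inequalities read off this equality.

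First I would note that ${^*}d$ extends $d$ on standard points, i.e.\ ${^*}d(a,b)=d(a,b)$ for $a,b\in X$ (a standard consequence of the transfer principle, and immediate from the ultrapower description of ${^*}X$ recalled in the introduction), so this value is a finite real. Next I would transfer the triangle inequality $(\forall u,v,w\in{^*}X)\,{^*}d(u,w)\le {^*}d(u,v)+{^*}d(v,w)$ and apply it twice to get $|{^*}d(a',b')-{^*}d(a,b)|\le {^*}d(a',a)+{^*}d(b',b)$. Since $a'\simeq a$ and $b'\simeq b$ the right-hand side is infinitesimal, so ${^*}d(a',b')\simeq {^*}d(a,b)=d(a,b)$; in particular ${^*}d(a',b')$ is finite, $st({^*}d(a',b'))$ is defined, and it equals $d(a,b)$.

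Granting this, part (1) is immediate: if ${^*}d(a',b')>r$ for a standard $r$, then $st({^*}d(a',b'))\ge r$ — a finite nonstandard real lying strictly above a standard number cannot have standard part below it, since no positive infinitesimal bridges a positive standard gap — and therefore $d(a,b)=st({^*}d(a',b'))\ge r$. For part (2), if $d(a,b)>r$ then $st({^*}d(a',b'))=d(a,b)>r$, which gives the claimed $\ge r$ (in fact with strict inequality).

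I do not expect a genuine obstacle here; this is a bookkeeping lemma whose whole content is the two transfer steps above. The only point that deserves a sentence of care is the existence of $st({^*}d(a',b'))$ — that is, the finiteness of ${^*}d(a',b')$ — which is precisely what the hypothesis tying $a',b'$ to the standard points $a,b$ is there to guarantee; a second, minor subtlety is that in part (1) "strictly above the standard $r$" only survives as "$\ge r$" after taking standard parts, which explains why the conclusion there is $\ge$ rather than $>$.
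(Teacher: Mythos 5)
Your proof is correct and follows essentially the same route as the paper's: both rest on the triangle inequality ${^*}d(a',b')\le {^*}d(a',a)+{^*}d(a,b)+{^*}d(b,b')$ (and its reverse), the infinitesimality of ${^*}d(a',a)$ and ${^*}d(b',b)$, and the observation that a positive standard real cannot be bounded by an infinitesimal; your packaging of this as the single equality $st({^*}d(a',b'))=d(a,b)$ is just a tidier presentation of the two one-sided estimates the paper carries out separately. You are also right that the hypotheses $a'\simeq a$, $b'\simeq b$ are missing from the statement as written but are exactly what the paper's own proof uses.
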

\begin{proof}
\begin{enumerate}
\item $r < {^*}d(a',b') \leq {^*}d(a'.a) + {^*}d(a,b) + {^*}d(b,b') = {^*}d(a',b') \leq {^*}d(a', a) + d(a,b) + {^*}d(b,b') = d(a,b) + \xi$, with $\xi$ infinitesimal, then $r < d(a,b) + \xi$, and then  $d(a,b) \geq r$, if it  is not then $d(a,b) < r$, and then $0 < r - d(a,b) < \xi$, which is absurd.
\item $r < d(a,b) = {^*}d(a,b) \leq {^*}d(a,a') + {^*}d(a',b') + {^*}d(b',b)$, but ${^*}d(a,a') \simeq 0$,  ${^*}d(b,b') \simeq 0$, then $r < {^*}d(a',b')  + \xi$, with $\xi \in {^*}\mathbb{R}$ is infinitesimal and not negative. If ${^*}d(a',b')$ is infinite, then it is proven. If this is not the case, then  $r=st(r) \leq st({^*}d(a',b') + \xi) = st({^*}d(a',b')) + st(\xi)= st({^*}d(a',b')) + 0 = st({^*}d(a',b'))$.
\end{enumerate}
\end{proof}

The following theorem was proven by Groisman and da Silva  for the freely expansive dynamics. We will present a proof that gives us a graphic intuition of the theorem.
\begin{teo} \label{doubly} Let $X$ be a compact metric space and  $f : X \rightarrow X$ be a homeomorphism, then $f$ is nonstandard expansive if and only if  $f$ is expansive and does not  have   doubly asymptotic points.
\end{teo}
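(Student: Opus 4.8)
The plan is to prove both implications by contraposition, using the nonstandard characterization of asymptoticity (the proposition just before Lemma~\ref{lema}) together with Theorem~\ref{caractcombnonstandarexpansivos} and Lemma~\ref{lema}.

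For the forward direction I would assume $f$ is nonstandard expansive with constant $c$; by definition $f$ is in particular expansive (take the standard part of an infinite separating index, or simply note that the separating set being infinite forces it to be nonempty). For the absence of doubly asymptotic points, suppose $x\neq y$ were doubly asymptotic. Then by the nonstandard characterization of asymptotic points, $f^m(x)\simeq f^m(y)$ for every $m\in{}^*\mathbb{N}_\infty$ and $f^m(x)\simeq f^m(y)$ for every $m\in{}^*\mathbb{Z}$ with $-m\in{}^*\mathbb{N}_\infty$; that is, ${}^*d(({}^*f)^m(x),({}^*f)^m(y))$ is infinitesimal for \emph{every} infinite integer $m$. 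In particular it is $<c$ for every $m\in{}^*\mathbb{Z}_\infty$, contradicting nonstandard expansiveness. Here one uses that $x\neq y$ as elements of $X$ gives $x\neq y$ in ${}^*X$, so the definition applies.

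For the converse I would assume $f$ is expansive with constant $c$ and has no doubly asymptotic points, and aim to show $f$ is nonstandard expansive, say with constant $c/2$ (or, invoking uniform expansiveness, with a constant derived from $c$). Suppose not: then there are distinct $x,y\in X$ such that the set $\{n\in\mathbb{Z}: d(f^n(x),f^n(y))>c/2\}$ is finite, by Theorem~\ref{caractcombnonstandarexpansivos}. Hence there is $N\in\mathbb{N}$ with $d(f^n(x),f^n(y))\leq c/2$ for all $\abs{n}\geq N$. Now consider, for each $m\in{}^*\mathbb{N}_\infty$, the points $u_m={}^*f^{\,m}(x)$ and $v_m={}^*f^{\,m}(y)$; by compactness of $X$ they have standard parts $p_m,q_m\in X$ with ${}^*d(u_m,v_m)\simeq{}^*d(p_m,q_m)$, so $d(p_m,q_m)\leq c/2$. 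The idea is to show $p_m$ and $q_m$ are doubly asymptotic: for any standard $k\in\mathbb{Z}$, $f^k(p_m)=st({}^*f^{\,m+k}(x))$ and $f^k(q_m)=st({}^*f^{\,m+k}(y))$, and since $m+k$ is still infinite, $d(f^{m+k}(x),f^{m+k}(y))\leq c/2$, whence $d(f^k(p_m),f^k(q_m))\leq c/2$ for \emph{all} $k\in\mathbb{Z}$. By expansivity with constant $c$ this forces $p_m=q_m$, i.e. ${}^*f^{\,m}(x)\simeq{}^*f^{\,m}(y)$ for all $m\in{}^*\mathbb{N}_\infty$, and symmetrically for the negative infinite integers. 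By the nonstandard characterization of asymptotic points this says $x$ and $y$ are doubly asymptotic, contradicting the hypothesis. (One must check $x\neq y$ survives: if $x=y$ there is nothing to separate, so we may assume $x\neq y$ from the start of the "suppose not".)

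The main obstacle I anticipate is the bookkeeping in the converse around the constant: applying Theorem~\ref{caractcombnonstandarexpansivos} one gets that \emph{some} tail of the orbit stays within $c$ of the comparison orbit, but one wants this at \emph{every} infinite integer, not just a cofinal sequence of them — this is exactly why one passes through the finiteness of the separating set rather than the mere non-divergence along a subsequence. The other delicate point is verifying that the standard-part argument produces genuinely distinct standard points $p_m=q_m$ that assemble into a statement about the original $x,y$; the clean way is the one above, showing directly that ${}^*f^{\,m}(x)\simeq{}^*f^{\,m}(y)$ for every infinite $m$ of either sign, which is precisely the nonstandard formulation of "doubly asymptotic". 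Compactness is used twice — once to take standard parts, implicitly, and once via expansivity being equivalent to uniform expansivity if one wishes to track the constants more carefully — so the hypothesis that $X$ is compact is essential and should be flagged.
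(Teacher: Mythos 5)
Your proposal is correct and rests on the same key ingredients as the paper's proof: the nonstandard characterization of asymptotic pairs for the forward direction, and, for the converse, standard parts of the orbit at infinite times (Robinson's compactness criterion), nonstandard continuity of $f^k$, and expansiveness of $f$ applied to those standard parts. The only difference is organizational: the paper argues the converse directly (one infinite $m$ with non-infinitesimal separation yields distinct standard parts $x',y'$, a standard separating time $n$, and hence separation $>\frac{c}{2}$ at the infinite time $n+m$), whereas you run the contrapositive through Theorem~\ref{caractcombnonstandarexpansivos}, showing that a pair with finite separating set would be doubly asymptotic; both are sound.
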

\begin{proof}
$\Rightarrow$: If $f$ is nonstandard expansive, then for all  $x,y \in X$ exists $m \in {^*}\mathbb{Z}_{\infty}$, such that  $d(f^m(x),f^m(y)) > c$. Then for the above proposition,  $x,y$ are not asymptotic .\\\\
$\Leftarrow$: The idea of the proof is described in the following:

\vspace{0.3 cm}

\psset{unit =0.8cm, linewidth=1.5\pslinewidth}
{
\begin{pspicture}(0,-4.684219)(16.169062,4.684219)
\definecolor{color817}{rgb}{0.1568627450980392,0.30196078431372547,0.6666666666666666}
\definecolor{color1236}{rgb}{0.21176470588235294,0.2980392156862745,0.996078431372549}
\psdots[dotsize=0.12](0.41,-2.3292189)
\psdots[dotsize=0.12](1.37,-4.0292187)
\psdots[dotsize=0.12](1.55,-1.8692187)
\psdots[dotsize=0.12](2.73,-1.3092188)
\psdots[dotsize=0.12](2.65,-3.4692187)
\psdots[dotsize=0.12](4.07,-2.6892188)
\psdots[dotsize=0.12](5.69,2.2307813)
\psdots[dotsize=0.12](7.57,-2.6692188)
\psdots[dotsize=0.12](6.01,1.6507813)
\psdots[dotsize=0.12](7.31,-1.9692187)
\psdots[dotsize=0.06](3.19,-0.64921874)
\psdots[dotsize=0.06](3.47,-0.22921875)
\psdots[dotsize=0.06](3.83,0.15078124)
\psdots[dotsize=0.06](4.23,0.73078126)
\psdots[dotsize=0.06](4.51,1.1107812)
\psdots[dotsize=0.06](4.91,1.3707813)
\psdots[dotsize=0.06](5.23,1.7507813)
\psdots[dotsize=0.06](5.43,1.9107813)
\psdots[dotsize=0.06](4.37,-2.7292187)
\psdots[dotsize=0.06](4.85,-2.7292187)
\psdots[dotsize=0.06](5.25,-2.7292187)
\psdots[dotsize=0.06](5.63,-2.7292187)
\psdots[dotsize=0.06](6.07,-2.8292189)
\psdots[dotsize=0.06](6.51,-2.7892187)
\psdots[dotsize=0.06](6.89,-2.8292189)
\psdots[dotsize=0.06](7.31,-2.7892187)
\psdots[dotsize=0.06](6.33,1.6907812)
\psdots[dotsize=0.06](6.71,1.8907813)
\psdots[dotsize=0.06](7.15,2.0507812)
\psdots[dotsize=0.06](7.53,2.2307813)
\psdots[dotsize=0.06](7.99,2.4907813)
\psdots[dotsize=0.06](8.45,2.5907812)
\psdots[dotsize=0.06](8.83,2.8507812)
\psdots[dotsize=0.06](7.69,-1.9092188)
\psdots[dotsize=0.06](8.21,-1.7092187)
\psdots[dotsize=0.06](8.75,-1.4892187)
\psdots[dotsize=0.06](9.31,-1.1692188)
\psdots[dotsize=0.06](10.03,-0.9092187)
\psdots[dotsize=0.06](10.45,-0.64921874)
\psdots[dotsize=0.06](10.73,-0.50921875)
\psdots[dotsize=0.12](9.13,2.9707813)
\psdots[dotsize=0.12](11.01,-0.40921876)
\psdots[dotsize=0.06](6.07,2.5307813)
\psdots[dotsize=0.06](6.49,2.7707813)
\psdots[dotsize=0.06](6.85,2.9707813)
\psdots[dotsize=0.06](7.23,3.2107813)
\psdots[dotsize=0.06](7.55,3.3507812)
\psdots[dotsize=0.06](8.05,3.5307813)
\psdots[dotsize=0.06](8.47,3.7307813)
\psdots[dotsize=0.06](8.09,-2.6692188)
\psdots[dotsize=0.06](8.75,-2.4092188)
\psdots[dotsize=0.06](9.33,-2.3092186)
\psdots[dotsize=0.06](10.07,-1.9892187)
\psdots[dotsize=0.06](10.45,-1.9892187)
\psdots[dotsize=0.06](11.01,-1.5692188)
\psdots[dotsize=0.06](11.57,-1.3892188)
\psdots[dotsize=0.12](11.97,-1.3492187)
\psdots[dotsize=0.12](8.69,4.0307813)
\usefont{T1}{ptm}{m}{n}
\rput(0.27453125,-2.0592186){$x$}
\usefont{T1}{ptm}{m}{n}
\rput(1.5745312,-4.459219){$y$}
\usefont{T1}{ptm}{m}{n}
\rput(1.4145312,-1.4992187){$f(x)$}
\usefont{T1}{ptm}{m}{n}
\rput(3.0345314,-3.7992187){$f(y)$}
\usefont{T1}{ptm}{m}{n}
\rput(2.4145312,-0.95921874){$f^{2}(x)$}
\usefont{T1}{ptm}{m}{n}
\rput(4.2345314,-3.1792188){$f^{2}(y)$}
\usefont{T1}{ptm}{m}{n}
\rput(5.3145313,2.5007813){${^*}f^{m}(x)$}
\usefont{T1}{ptm}{m}{n}
\rput(7.594531,-3.2592187){${^*}f^{m}(y)$}
\usefont{T1}{ptm}{m}{n}
\rput(6.134531,1.2407813){$x'$}
\usefont{T1}{ptm}{m}{n}
\rput(7.1945314,-1.6392188){$y'$}
\usefont{T1}{ptm}{m}{n}
\rput(9.234531,2.4207811){$f^{n}(x')$}
\usefont{T1}{ptm}{m}{n}
\rput(11.374531,-0.09921875){$f^n(y')$}
\usefont{T1}{ptm}{b}{n}
\rput(11.50,1.5607812){\color{color817}\text{ greather than $c$}}
\usefont{T1}{ptm}{m}{n}
\rput(12.80,-1.9392188){$f^n({^*}f^{m}(y)) = {^*}f^{n +m}(y)$}
\usefont{T1}{ptm}{m}{n}
\rput{-66.58771}(6.67999,5.3516893){\rput(7.384531,-2.3992188){$\simeq$}}
\usefont{T1}{ptm}{m}{n}
\rput{-60.430183}(1.3506374,5.939222){\rput(5.744531,1.8207812){$\simeq$}}
\usefont{T1}{ptm}{m}{n}
\rput{-55.8805}(1.0901445,8.835432){\rput(8.844531,3.4007812){$\simeq$}}
\usefont{T1}{ptm}{m}{n}
\rput{-51.991806}(5.1496534,8.660271){\rput(11.424531,-0.93921876){$\simeq$}}
\psline[linewidth=0.04cm,linecolor=color1236](9.61,2.1507812)(10.49,0.31078124)
\usefont{T1}{ptm}{m}{n}
\rput(8.474531,4.480781){$f^n({^*}f^{m}(x)) = {^*}f^{n +m}(x)$}
\end{pspicture} 
}

\vspace{0.3 cm}

If $x,y \in X$, are not  doubly asymptotic, then $m\in {^*}\mathbb{Z}_{\infty}$ exists such that $f^m(x) \not\simeq f^m(y)$. Then $\alpha \in \mathbb{R}$ exists such that  ${^*}d(f^m(x),f^m(y)) > \alpha$, and  $X$ is compact for the Robinson\footnote{Let $X$ be a metric space, then $X$ is compact if and only if  for all $y \in {^*}X$ exists $x \in X$, such that ${^*}d(x,y) \simeq 0$. See \cite{loebwolf}} theorem  $x', y' \in X$ exists such that $x' \simeq f^m(x)$ , $y' \simeq f^m(y)$, and if ${^*}d(f^m(x),f^m(y)) > \alpha$  then  $d(x',y') \geq \alpha$, in particular $x' \neq y'$. However, if  $f$ is expansive, then  $n \in \mathbb{Z}$ exists such that $d(f^n(x'),f^n(y')) >c$. However, if $f^n$ is continuous \footnote{Let $X$ be a metric space, $f : X \rightarrow X$ function and  $x_0 \in X$.  $f$ is continuous in  $x_0$ if and only if  for all $x \in {^*}X$, with $x \simeq x_0$, ${^*}f(x) \simeq f(x_0)$ is verified. See \cite{loebwolf}},then   ${^*}(f^n)(f^m(x)) \simeq f^n(x')$, ${^*}(f^n)(f^m(y))\simeq f^n(y')$. Then, for the lemma  \ref{lema} part 2,\\ $st(d(f^n(f^m(x)),f^n(f^m(y)))) \geq c$, but this implies that  \\ $d(f^n(f^m(x)),f^n(f^m(y))) > \frac{c}{2}$. If  $n \in \mathbb{Z}$ and  $m \in {^*}\mathbb{Z}_{\infty}$, then $n +m \in {^*}\mathbb{Z}_{\infty}$, and then $f$ is nonstandard expansive.
\end{proof}

An important example of expansive homeomorphis is the subshift. We know by \cite{KR} that every expansive homeomorphism on a totally disconnected compact metric space is a conjugate of a subshift. 
\begin{ej}
Let  $X \subset \Sigma^{\mathbb{Z}}$, where $\Sigma = \lbrace 1,2,3 \rbrace$. 
On $\Sigma^{\mathbb{Z}}$, we consider the distance $d(\alpha, \beta) = \sum_{i \in \mathbb{Z}} \abs{\alpha(i) - \beta(i)}2^{-\abs{i}}$ and the shift map $\sigma : \Sigma^{\mathbb{Z}} \rightarrow \Sigma^{\mathbb{Z}}$, $\sigma((\alpha(i))_{i\in \mathbb{Z}}) = (\alpha(i + 1))_{i \in \mathbb{Z}}$.
Let $0 < a < b < 1$ be rationally independent real numbers. 
Consider the interval exchange map $T: I \rightarrow I$, which is defined as follows:

Define the three intervals $I_1=[0,a)$, $I_2=[a,b)$ and 
$I_3=[b,1)$. Define the \emph{itinerary map}
$\iti: I\to\Sigma^{\mathbb{Z}}$ as $\iti(x) = (\alpha_k)_{k \in \mathbb{Z}}$ 
if $T^k(x)\in I_{\alpha_k}$ for all $k\in\Z$.
We obtain the following commutative diagram.

For $\alpha\in\Sigma^{\Z}$, we define $X_\alpha$ as the closure of the orbit $\{\sigma^n(\alpha):n\in \Z\}$. 
We say that $x\in I$ is \emph{regular} if 
$T^n(x)\notin \{0,a,b\}$ for all $n\in\Z$.\\
By - we know that this subshift does not have doubly asymptotic points, and then by the theorem \ref{doubly} is nonstandard expansive.
\end{ej}
The nonstandard analysis allows us to introduce a concept that is analogous to uniform expansiveness for expansive nonstandard dynamic.
\begin{defi}
Let  $X$ be a  metric space, and $f : X \rightarrow X$ be a homeomorphism. We say that $f$ is $c$-uniformly nonstandard expansive, and if for all $\epsilon > 0$ exists $n_{\epsilon} \in {^*}\mathbb{N}_{\infty}$ such that for all $x,y \in X$, and $d(x,y) > \epsilon$, then $d(f^i(x),f^i(y)) > c$, for some $i \in {^*}\mathbb{Z}_{\infty}$, $\abs{i} < n_{\epsilon}$.
\end{defi}

\begin{teo} \label{teounifexpnon}  Let $X$ be a metric compact space and $f : X \rightarrow X$ be homeomorphisms, then the following statements are equivalent.
\begin{enumerate}
\item $f$ is $c$- nonstandard expansive.\\
\item For all $\epsilon \in \mathbb{R}$ , $\epsilon > 0$, and  for all $n \in \mathbb{N}$, exists $m \in \mathbb{N}$, $m > n$ such that for all $x,y \in X$, if  $d(x,y)  > \epsilon$ then \\ $d(f^i(x),f^i(y)) > c$ for some  $i \in \mathbb{Z}$ with $n < \abs{i} < m$.\\
\item For all $\epsilon \in \mathbb{R}$, $\epsilon >0$ exists $n : \mathbb{N} \rightarrow \mathbb{N}$ succession is strictly monotonic such that $x,y \in X$, if $d(x,y) > \epsilon$ then for all $k \in\mathbb{N}$,   
$d(f^i(x),f^i(y)) > c$ is verified for some $i \in \mathbb{Z}$  with $n(k) < \abs{i} < n(k+1)$.\\
\item For all $\epsilon > 0$, $n_{\epsilon} \in {^*}\mathbb{N}_{\infty}$ exists  such that for all $x,y \in {^*}X$, if ${^*}d(x,y) > \epsilon$, then ${^*}d(f^i(x),f^i(y)) > c$, for some $i \in {^*}\mathbb{Z}_{\infty}$, $\abs{i} < n_{\epsilon}$.\\
\item $f$ is $c$-uniformly nonstandard expansive.
\end{enumerate}
\end{teo}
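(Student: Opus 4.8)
The plan is to establish the equivalences by a cycle, roughly $(1)\Leftrightarrow(2)$, $(2)\Leftrightarrow(3)$, $(2)\Leftrightarrow(4)$, and $(4)\Leftrightarrow(5)$, with the transfer principle doing the bulk of the work in each link. The guiding idea is that $c$-nonstandard expansiveness (statement (1)) is, by Theorem \ref{caractcombnonstandarexpansivos}, the purely combinatorial condition that for distinct $x,y$ the set $\{n\in\mathbb Z: d(f^n(x),f^n(y))>c\}$ is infinite; using compactness and uniform expansiveness techniques (as in the proof of Proposition on $c$-uniform expansiveness), one upgrades this pointwise statement to a uniform-in-$\epsilon$ statement, which is exactly (2). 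I would first prove $(1)\Rightarrow(2)$: assuming (2) fails, there is $\epsilon>0$ and $n\in\mathbb N$ such that for every $m>n$ there are points $x_m,y_m$ with $d(x_m,y_m)>\epsilon$ but $d(f^i(x_m),f^i(y_m))\le c$ for all $i$ with $n<\abs i<m$; by compactness pass to convergent subsequences $x_m\to x$, $y_m\to y$, note $d(x,y)\ge\epsilon>0$ so $x\neq y$, and by continuity of each $f^i$ deduce $d(f^i(x),f^i(y))\le c$ for all $\abs i>n$, contradicting that the separation set is infinite (hence (1) fails). The converse $(2)\Rightarrow(1)$ is immediate since (2) applied with $\epsilon=d(x,y)$ forces infinitely many separation times.

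For $(2)\Leftrightarrow(3)$: statement (3) is just (2) iterated — given the function $m=g(n,\epsilon)$ from (2), build the strictly increasing sequence $n_\epsilon(0)=0$, $n_\epsilon(k+1)=g(n_\epsilon(k),\epsilon)$, and observe that (2) applied on each block $(n_\epsilon(k),n_\epsilon(k+1))$ gives a separation time in that block; conversely (3) trivially implies (2) by taking $m=n(k+1)$ for the first $k$ with $n(k)\ge n$. For $(2)\Leftrightarrow(4)$ and $(4)\Leftrightarrow(5)$ I would use transfer exactly in the style of the Remark following Theorem \ref{caractcombnonstandarexpansivos}: the statement in (2) is a first-order formula
\[
(\forall \epsilon>0)(\forall n\in\mathbb N)(\exists m\in\mathbb N)\big(m>n \wedge (\forall x,y\in X)(d(x,y)>\epsilon \rightarrow (\exists i\in\mathbb Z)(n<\abs i<m \wedge d(f^i(x),f^i(y))>c))\big),
\]
and applying transfer, then specializing $n$ to an infinite hypernatural $N\in{}^*\mathbb N_\infty$, yields an $m={}^*g(N,\epsilon)\in{}^*\mathbb N_\infty$ and for every pair $x,y\in{}^*X$ with ${}^*d(x,y)>\epsilon$ a hyperinteger $i$ with $N<\abs i<m$, hence $i\in{}^*\mathbb Z_\infty$ and $\abs i<m$; setting $n_\epsilon=m$ (or $n_\epsilon=m$ reindexed) gives (4), and (4) restricted to standard $x,y\in X$ is literally (5). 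For the reverse directions, from (5)/(4) one recovers (1) by noting that distinct standard $x,y$ have $d(x,y)>\epsilon$ for some standard $\epsilon$, so they are separated at an infinite time, which is (1) via Theorem \ref{caractcombnonstandarexpansivos}; combined with $(1)\Rightarrow(2)$ this closes the cycle. One should also record, via Lemma \ref{lema}, that separation by more than $c$ at a hyperinteger time for standard points descends to the standard world up to the usual $c$ versus $c/2$ loss — though here, since (1) is already phrased with the same constant $c$ on both sides through Theorem \ref{caractcombnonstandarexpansivos}, no constant is lost and the five conditions hold for the same $c$.

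The main obstacle I anticipate is the compactness-plus-continuity argument in $(1)\Rightarrow(2)$: one must be careful that the limit points $x,y$ genuinely inherit $d(f^i(x),f^i(y))\le c$ for \emph{every} index $\abs i>n$ simultaneously, which requires a diagonal choice of subsequence (for each fixed $i$ the inequality $d(f^i(x_m),f^i(y_m))\le c$ only holds once $m>\abs i$, so one extracts along a subsequence working for all $i$ at once), and that the resulting closed condition indeed contradicts (1). The transfer-principle links $(2)\Leftrightarrow(4)\Leftrightarrow(5)$ are routine once the first-order formula is written correctly, and $(2)\Leftrightarrow(3)$ is a bookkeeping exercise with the recursively defined sequence; the only subtlety there is ensuring $n_\epsilon$ is chosen as a genuine function of $\epsilon$ (not of the pair $x,y$), which is already guaranteed by the order of quantifiers in (2).
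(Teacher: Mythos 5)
Your plan is correct and its two substantive components coincide with the paper's: the implication $(1)\Rightarrow(2)$ by contradiction, compactness and continuity of each $f^{i}$ (using Theorem \ref{caractcombnonstandarexpansivos} to read ``not nonstandard expansive'' as ``finite separation set''), and the recursive construction of the sequence for $(2)\Rightarrow(3)$. Where you genuinely diverge is in how statement $(4)$ is reached: the paper proves $(3)\Rightarrow(4)$ by first packaging the separation times into a choice function $i:X\times X\times\mathbb{N}\rightarrow\mathbb{Z}$ (invoking a choice principle over $X\times X\times\mathbb{N}$) and then transferring that function, whereas you transfer the $\forall n\,\exists m$ formula of $(2)$ directly and specialize $n$ to an infinite hypernatural $N$, getting $m>N$ infinite and a separation time $i$ with $N<\abs{i}<m$. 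Your route is cleaner: transfer applies to the alternating-quantifier formula as it stands, so no Skolem function and no appeal to choice is needed; the cost is only that you must still prove $(2)\Leftrightarrow(3)$ separately to keep $(3)$ in the equivalence, and you close the cycle via $(2)\Rightarrow(1)$ rather than the paper's $(5)\Rightarrow(1)$ (both are one-liners). Two small remarks: the diagonal subsequence you worry about in $(1)\Rightarrow(2)$ is not needed --- a single subsequence with $x_{m}\to x$, $y_{m}\to y$ suffices, since for each fixed $i$ with $\abs{i}>n$ the inequality $d(f^{i}(x_{m}),f^{i}(y_{m}))\leq c$ holds for all $m>\abs{i}$ and hence eventually along that subsequence; and the $c$ versus $c/2$ loss from Lemma \ref{lema} plays no role here, as you correctly conclude, because all five statements are phrased with the same strict inequality $>c$ and the limiting argument only ever produces the closed condition $\leq c$.
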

\begin{proof}
$1 \Rightarrow 2$ : Suppose that  $2$ is not true. Then  $\epsilon > 0$ y $n \in \mathbb{N}$ exists such that for all $m \in \mathbb{N}$, with $m > n$ exists $x_m,y_m \in X$ such that $d(x_m,y_m) > \epsilon$ and $d(f^i(x_m),f^i(y_m)) \leq c$ for all $i\in \mathbb{Z}$ with $n < \abs{i} < m$. If  $X$ is compact, then the taken  subsuccession is necessary. We can suppose that $(x_m)_{m\in \mathbb{N}}$ converge to $x$ and $(y_m)_{m\in \mathbb{N}}$ converge to $y$.\\
Let $k \in \mathbb{Z}$ such that  $\abs{k} > n$, be $\delta > 0$. If $f^k$ is continuous, then $f^k(x_m)$ converge to $f^k(x)$ and $f^k(y_m)$ converge to $f^k(y)$, then we can find  $m$ such that  $m > n$, $m > \abs{k}$,  $d(f^k(x_m),f^k(x)) < \frac{\delta}{2}$ y $d(f^k(y_m),f^k(y)) < \frac{\delta}{2}$, 
$d(f^k(x),f^k(y)) \leq d(f^k(x),f^k(x_m)) + d(f^k(x_m),f^k(y_m)) + d(f^k(y_m),f^k(y)) \leq 
\frac{\delta}{2} + c + \frac{\delta}{2} = c + \delta$, then  $d(f^k(x),f^k(y)) \leq c$ for all $k \in \mathbb{Z}$ with $\abs{k} > n$, and then $x,y$ they are separated at most by a finite number of points, and then $f$ is not nonstandard expansive, which is absurd.\\\\
$2 \Rightarrow 3$ : We will build a succession by recursion, take $n(1) = 1$ and  $n(2) > n(1)$ the $"m"$ of the statement $2$. Now suppose that we have defined \\ $n(1), n(2), \ldots , n(k)$, take $n(k+1) > n(k)$ the $"m"$ of the statement  $2$, then the statement is proven.\\\\
$3 \Rightarrow 4$ : Let  $\epsilon > 0$, $n : \mathbb{N} \rightarrow \mathbb{N}$ the succession of statement $3$, and a $l \in {^*}\mathbb{N}_{\infty}$. Let  $x,y \in X$ such that  $d(x,y) > \epsilon$ for the the numerable axiom of choice and the statement $3$, $i : \mathbb{N} \rightarrow \mathbb{Z}$ exists such that for all  $k \in \mathbb{N}$, $n(k) < \abs{i(k)} < n(k+1)$ y $d(f^{i(k)}(x), f^{i(k)}(y)) > c$. It is possible to extend the function $i$, such that  $i : X \times X \times \mathbb{N} \rightarrow \mathbb{Z}$. It is verified that if  $d(x,y) > \epsilon$, $i(x,y,k)$ be the old function and for the other case a constant arbitrary. Then, the following formula is true 
$(\forall x \in X)(\forall y \in X)( d(x,y) > \underline{\epsilon} \rightarrow (\forall k \in \mathbb{N})((n(k) < \abs{i(x,y,k)} < n(k+1))\wedge (d(f^{i(x,y,k)}(x),f^{i(x,y,k)}(x) > \underline{c})$. Then, for the transfer principle, the following formula is also true
$(\forall x \in {^*}X)(\forall y \in {^*}X)( {^*}d(x,y) > \underline{\epsilon} \rightarrow (\forall k \in {^*}\mathbb{N})(({^*}n(k) < \abs{{^*}i(x,y,k)} < {^*}n(k+1))\wedge ({^*}d({^*}f^{{^*}i(x,y,k)}(x),{^*}f^{{^*}i(x,y,k)}(x) > \underline{c})$
If  $l \in {^*}\mathbb{N}_{\infty}$  and  $n$ is strictly monotonic, then we have ${^*}n(l) \in {^*}\mathbb{N}_{\infty}$ and ${^*}n(l+1 ) \in {^*}\mathbb{N}_{\infty}$, then for all  $x, y \in {^*}\mathbb{X}$, ${^*}i(x,y,l) \in {^*}\mathbb{Z}_{\infty}$ is verified, and then the statement is proven.\\\\
$4 \Rightarrow 5$ : If $4$ is true, in particular $5$.\\\\
$5 \Rightarrow 1$ : Let  $x,y \in X$, $x\neq y$, take $\epsilon < d(x,y)$, because  $f$ is   $c$-uniformly nonstandard expansive, and then the statement is proven.
\end{proof}

\section{Questions and problems}

\begin{itemize}
\item We saw that  if  $f : X \rightarrow X$ is a homeomorphism with $X$ a compact metricspace, and if $f$ is nonstandard expansive, then $f$ is  uniformly nonstandard expansive. However, is this result true for a non-compact metric space? \\

\item In our work, all of the results are independent of the nonstandard model; that is, it does not depend on the ultrafilter with which it was built or the model. However, are there interesting dynamic properties that depend on the nonstandard model?\\

\item All the examples that we know of for homeomorphisms of compact spaces are also totally disconnected. However, are there dynamics of this type in environments without this topological constraint?
\end{itemize}

\renewcommand{\abstractname}{Acknowledgements}
\begin{abstract}
This work was carried out in the context of the Master's thesis \cite{rosa2019expansividad} by Facultad de Ciencias (Udelar). I wish to thank Professor Groisman for her guidance.

\end{abstract}

\begin{bibdiv}
	\begin{biblist}
\bib{KR}{article}{
	author={H.B. Keynes},
	author={J.B. Robertson},
	title={Generators for topological entropy and expansiveness},
	journal={Mathematical systems theory},
	volume={3},
	year={1969},
	pages={51--59}}

\bib{loebwolf}{book}{
	title={Nonstandard analysis for the working mathematician},
  author={Loeb, Peter A, and Wolff, Manfred PH},
  year={2000},
  publisher={Springer}
}

\bib{bryant1960expansive}{article}{
  title={On expansive homeomorphisms},
  author={Bryant, BF, and others},
  journal={Pacific J. Math},
  volume={10},
  pages={1163--1167},
  year={1960}
}

\bib{bryant1962expansive}{article}{
  title={Expansive self-homeomorphisms of a compact metric space},
  author={Bryant, BF},
  journal={The American Mathematical Monthly},
  volume={69},
  number={5},
  pages={386--391},
  year={1962},
  publisher={Taylor \& Francis}
}

\bib{jorgesamuel}{article}{
  title={Expansive dinamics in the sense of ultrafilters, unpublished notes},
  author={Jorge Groisman and Samuel G da Silva},
  journal={},
  volume={},
  number={},
  pages={},
  year={2016},
  publisher={Unpublished notes}
}

\bib{artigue2020subshift}{article}{
  title={Asymptotic Pairs for Interval Exchange Transformations},
  author={Alfonso Artigue,Ethan Akin, and Luis Ferrari},
  journal={arXiv preprint arXiv:2009.02592},
  year={2020}
}

\bib{bryant1966some}{article}{
  title={Some expansive homeomorphisms of the reals},
  author={Bryant, BF, and Coleman, DB},
  journal={The American Mathematical Monthly},
  volume={73},
  number={4},
  pages={370--373},
  year={1966},
  publisher={JSTOR}
}

\bib{rosa2019expansividad}{book}{
  title={Expansividad non-standard},
  author={Luis Ferrari},
  year={2019},
  publisher={Udelar. FC.}
}

\bib{utz1950unstable}{article}{
	title={Unstable homeomorphisms},
	author={W.R. Utz},
	journal={Proceedings of the American Mathematical Society},
	volume={1},
	number={6},
	pages={769--774},
	year={1950},
	publisher={JSTOR}
}		
	\end{biblist}
\end{bibdiv}

\end{document}